\newtheorem{theorem}{Theorem}[section]
\newtheorem{definition}[theorem]{Definition}
\newtheorem{proposition}[theorem]{Proposition}
\newenvironment{proof}[1][Proof]{\textbf{#1.} }{\ \rule{0.5em}{0.5em}}
\newcommand{\val}{{\rm val\ }}
\newcommand{\dN}{{\bf N}}
\newcommand{\dR}{{\bf R}}
\newcommand{\calV}{{\cal V}}
\newcommand{\ep}{\varepsilon}
\newcounter{figurecounter}
\begin{document}

\title{Characterizing the Value Functions of Polynomial Games%
\thanks{The authors acknowledge the support of the Israel Science Foundation, grants \#217/17 and \#722/18, and NSFC-ISF Grant \#2510/17.
We thank Francesca Acquistapace, Fabrizio Broglia, Jos\'{e} Galv\'{a}n, Ehud Lehrer, and Omri Solan for useful discussions.}}

\author{Galit Ashkenazi-Golan, Eilon Solan and Anna Zseleva%
\thanks{The School of Mathematical Sciences, Tel Aviv
University, Tel Aviv 6997800, Israel. e-mail:
galit.ashkenazi@gmail.com, eilons@post.tau.ac.il, zseleva.anna@gmail.com.}}

\maketitle

\begin{abstract}
We provide a characterization of the set of real-valued functions
that can be the value function of some polynomial game.
Specifically, we prove that a function $u : \dR \to \dR$ is the value function of some polynomial game
if and only if $u$ is a continuous piecewise rational function.
\end{abstract}

\noindent\textbf{Keywords:} Polynomial game, value,
characterization, Bayesian games.

\section{Introduction}

A \emph{polynomial game}%
\footnote{Dresher, Karlin, and Shapley (1950) studied a related
class of games that they termed \emph{polynomial games}; those are
two-player zero-sum strategic-form games in which the action set of each player is $[0,1]$
and the payoff function is polynomial in $x$ and $y$,
where $x \in [0,1]$ is the mixed action chosen by Player~1 and $y \in [0,1]$ is the mixed action chosen by Player~2.}
is a strategic-form game whose payoffs are polynomials in a parameter $Z$.
The value of the two-player zero-sum polynomial game depends on the value of $Z$, and therefore it is a function $u : \dR \to \dR$.
Since the value of a two-player zero-sum strategic-form game with finitely many strategies
is a solution of a set of linear inequalities (see, e.g., Shapley and Snow (1950)),
it follows that the value function $u$ is a continuous and piecewise rational function; that is,
one can divide the real line $\dR$ into finitely many intervals such that the function $u$ is a rational function on each piece.
In this note we show that the converse also holds:
every continuous and piecewise rational function is the value function of some polynomial game.

Characterizing the collection of value functions of a given model has several reasons.
First, the richness of the set of value functions indicates the complexity of the model,
and allows us to compare models that seem unrelated.
Second, each restriction that the value function must satisfy arises from some aspect(s) in the model,
hence increases our understanding of the model.
Third,
sometimes we are given the value for some parameters, and we would like to estimate the value for other parameters.
Once we identify the set of possible value functions of the model,
we know how many data points we need to estimate the value function for new parameters.
Fourth,
sometimes we are given the value function for some parameters,
yet we do not completely know the underlying model.
The characterization of the set of value functions may allow us to rule out possible models
or increase our confidence in a prospective model.

The note is organized as follows.
In Section~\ref{sec:model} we describe the model and main result,
in Section~\ref{sec:proof} we provide the proof of the main result,
and in Section~\ref{sec:discussion} we discuss extensions of the main result.

\section{The Model and the Main Result}
\label{sec:model}

\begin{definition}
A two-player zero-sum \textup{polynomial game} is a tuple $\Gamma = (A, B, G)$, where
\begin{itemize}
    \item $A$ is the finite strategy set of Player~1,
    \item $B$ is the finite strategy set of Player~2,
    \item $G = ((G_{a,b})_{a \in A, b \in B})$ is the payoff matrix of size $|A| \times |B|$, whose entries are polynomials in $Z$.
\end{itemize}
\end{definition}

For every $z \in \dR$, denote by $\Gamma(z) = (A,B,G(z))$ the two-player zero-sum strategic-form game
where the sets of strategies of the two players are $A$ and $B$ respectively, and the payoffs
are $G(z) = (G_{a,b}(z))_{a \in A, b \in B})$, that is, the payoffs in $\Gamma$ evaluated at $Z=z$.
Denote by $u(z):=\text{val}(\Gamma(z))$ the value of the game $\Gamma(z)$.
The function $u : \dR \to \dR$ is the \emph{value function} of the polynomial game $\Gamma$.
Denote by $\mathcal{V}$ the set of all real-valued functions that are value functions of some polynomial game $\Gamma$.

\begin{definition}
A function $u : \dR \to \dR$ is \emph{piecewise rational}
if there are a natural number $K \in \dN$ and a sequence $-\infty = h_1 < h_2 < \cdots < h_K = \infty$,
and rational functions $(\tfrac{Q_k}{R_k})_{k=1}^{K-1}$ such that\footnotemark
\[ u = \sum_{k=1}^{K-1} \mathbf{1}_{(h_k,h_{k+1}]} \tfrac{Q_k}{R_k}. \]
\end{definition}

\footnotetext{To simplify notation, in the equation below, for $k=K-1$ we write $(h_{K-1},h_K]$ and not $(h_{K-1},h_K)$.}

Our main result is the following.
\begin{theorem}
\label{theorem:1}
The set $\mathcal V$ coincides with the set of all continuous and piecewise rational functions from $\dR$ to $\dR$.
\end{theorem}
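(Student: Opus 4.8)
The plan is to prove the two inclusions of Theorem~\ref{theorem:1} separately. The inclusion $\mathcal{V}\subseteq\{\text{continuous piecewise rational functions}\}$ is the argument sketched in the Introduction: the value of a matrix game is $1$-Lipschitz in the sup-norm of the payoff matrix, so $z\mapsto\mathrm{val}(\Gamma(z))$ is continuous, and by Shapley--Snow the value of $\Gamma(z)$ equals $\det M'(z)\big/\big(\mathbf{1}^{\top}\mathrm{adj}\,M'(z)\,\mathbf{1}\big)$ for one of the finitely many square submatrices $M'$ of $G$, each of which is ``relevant'' on a set of $z$ cut out by polynomial inequalities --- a finite union of intervals --- so after merging cells carrying the same rational function we get the piecewise rational form. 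For the converse inclusion I would first collect closure properties of $\mathcal{V}$: every polynomial lies in $\mathcal{V}$ (a $1\times1$ game); $\mathcal{V}$ is closed under $u\mapsto\alpha u+\beta$ for real $\alpha,\beta$ (scale all payoffs, add a constant to all payoffs, and observe that $-u$ is the value of the game with payoff matrix $-G^{\top}$); $\mathcal{V}$ is closed under sums (the two players play $\Gamma_1$ and $\Gamma_2$ simultaneously and payoffs are added); $\mathcal{V}$ is closed under $\max$ and $\min$ (for $\max(u,v)$ Player~1's strategy set is the disjoint union of those of $\Gamma_1,\Gamma_2$ and Player~2's is the product, so Player~1 effectively chooses which game to play, and $\min$ is dual); $\mathcal{V}$ is closed under multiplication by an arbitrary polynomial (multiplying all payoffs by a polynomial that is $\ge0$ multiplies the value by it, every polynomial equals $\tfrac12(q+1)^2-\tfrac12(q^2+1)$, and we invoke closure under sums); and $\mathcal{V}$ is closed under $u\mapsto u\circ\varphi$ for a polynomial $\varphi$ (substitute $Z\mapsto\varphi(Z)$).

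The next step is to put an explicit family of games into $\mathcal{V}$. For a polynomial $R$ that is nonnegative on $\mathbb{R}$, the game $\left(\begin{smallmatrix}0&1\\ R(Z)&0\end{smallmatrix}\right)$ has value $R/(1+R)$ (on $\{R>0\}$ this is the generic $2\times2$ value, and on $\{R=0\}$ the value is $0$), so $1/(1+R)\in\mathcal{V}$; since a polynomial $S$ positive on $\mathbb{R}$ has a positive minimum, $cS-1\ge0$ for large $c>0$, whence $1/S=c\cdot1/(cS)\in\mathcal{V}$. Together with multiplication by polynomials, sums, and polynomial division (a denominator without real zeros has constant sign), this shows that $\mathcal{V}$ contains every rational function with no real pole. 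That is still weaker than what we need: a continuous piecewise rational $u$ may have a piece $Q_k/R_k$ whose denominator vanishes outside $(h_k,h_{k+1}]$, and such a $u$ is in general not obtainable from pole-free rational functions by the lattice operations and sums --- along a ray where the active branch stabilizes, any such combination is a single pole-free rational function, while $u$ need not be. These ``hidden poles'' must be produced by games that are completely mixed on part of $\mathbb{R}$ and have a pure saddle point on the rest.

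The workhorse for that is the game $\left(\begin{smallmatrix}P^2& P(P-S)+1\\0&1\end{smallmatrix}\right)$, whose value is $P/S$ on $\{PS\ge1\}$ and $P^2$ on $\{PS\le1\}$, glued continuously (the generic $2\times2$ value is $P^2/(PS)$, and the interior equilibrium $p^{*}=1/(PS)$ lies in $(0,1)$ exactly when $PS>1$); this value is a genuine continuous function even when $P/S$ has poles, since the poles of $P/S$ lie in $\{PS<1\}$. Before using it I would reduce to $u>0$ on all of $\mathbb{R}$: $|u|$ grows at most polynomially, so adding a polynomial such as $C(2+z^{2d})$ makes $u$ positive without moving the breakpoints, and $\mathcal{V}$ is closed under adding and subtracting polynomials. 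With $u>0$, for each piece write $u=Q_k/R_k$ with $R_k>0$ on $[h_k,h_{k+1}]$, so $Q_k>0$ there, and $Q_kR_k$ is bounded below by a positive constant on $[h_k,h_{k+1}]$ (true for the two unbounded end pieces as well, since $Q_kR_k$ tends there to $+\infty$ or to a positive constant). Applying the workhorse with $P=\lambda_kQ_k$ and $S=\lambda_kR_k$ for a large enough polynomial $\lambda_k$ gives $B_k\in\mathcal{V}$ with $B_k=u$ on $[h_k,h_{k+1}]$ and $B_k=(\lambda_kQ_k)^2\ge0$ off a slightly larger set, and the aim is to recover $u$ as $\min_kB_k$ after enlarging the $\lambda_k$ so the off-piece values exceed $u$.

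The step I expect to be the main obstacle is precisely this assembly. The set $\{\lambda_k^2Q_kR_k\ge1\}$ on which $B_k$ equals $Q_k/R_k$ contains $[h_k,h_{k+1}]$ but typically protrudes beyond it into regions where $Q_k/R_k<u$, which ruins $u=\min_kB_k$; and increasing $\lambda_k$ to lift the off-piece value above $u$ only enlarges the protrusion. Overcoming this needs either a sharper building block that confines the completely-mixed region essentially to $[h_k,h_{k+1}]$ (delicate, because the sign of $Q_kR_k$ at $\pm\infty$ is not under our control), or an induction on the number of pieces peeling off one breakpoint at a time while keeping every auxiliary value function pole-free wherever it is active, so that the overlaps become harmless --- this, together with the uniform positivity bookkeeping for the unbounded end pieces, is the technical heart. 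The remaining ingredients --- the closure properties and the value computations for the two $2\times2$ families above --- are routine.
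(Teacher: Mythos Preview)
Your forward inclusion and your catalogue of closure properties are fine, and both $2\times2$ value computations are correct. The gap is exactly where you place it: you never carry out the assembly. Your workhorse produces $B_k\in\mathcal V$ that agrees with $Q_k/R_k$ on $\{\lambda_k^{2}Q_kR_k\ge1\}$, but you offer no way to keep that set from overrunning $[h_k,h_{k+1}]$ --- and, as you note, enlarging $\lambda_k$ makes the overrun worse, not better. The phrase ``peel off one breakpoint at a time'' does not explain how to glue two members of $\mathcal V$ at a single point $z_0$; the induction is trivial once that one-point splicing exists, and vacuous without it. So the proposal is incomplete at its only nontrivial step.

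The paper bypasses the protrusion issue with two ideas you are missing. First, it drops your workhorse in favour of the simpler building block $Q_k/\max\{R_k,\varepsilon_k\}$ with $\varepsilon_k:=\inf_{(h_k,h_{k+1})}R_k>0$: since $\max\{R_k,\varepsilon_k\}\ge\varepsilon_k$ on all of $\dR$, its reciprocal is already in $\mathcal V$ (your own $1/S$ argument), and multiplying by $Q_k$ gives a member of $\mathcal V$ that equals $Q_k/R_k$ on $[h_k,h_{k+1}]$ and is defined everywhere --- there is no protrusion to manage. Second, it proves a clean one-point splicing lemma: if $u,w\in\mathcal V$ agree at $z_0$, then $\mathbf 1_{\{z\le z_0\}}u+\mathbf 1_{\{z>z_0\}}w\in\mathcal V$. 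The trick is that continuous piecewise rational functions are locally Lipschitz and have at most polynomial growth, so for large $c>0$ and large odd $2N+1$ the polynomial $P(z)=u(z_0)+c(z-z_0)+(z-z_0)^{2N+1}$ satisfies $u\ge P$ on $(-\infty,z_0]$ and $u,w\le P$ on $[z_0,\infty)$; a mirror-image polynomial $Q$ does the same on the other side. Then the splice equals $\min\{\max\{u,P\},\max\{w,Q\}\}$, which lies in $\mathcal V$ by the lattice closures you already have. Iterating this across the breakpoints finishes the proof. This polynomial-domination gluing is the idea your plan lacks.
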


\section{Proofs}
\label{sec:proof}

This section is devoted to the proof of Theorem~\ref{theorem:1}.
We start with the following simple properties of the set $\mathcal{V}$.

\begin{proposition}\label{prop:minus}
Let $u, w \in \mathcal{V}$. Then
\begin{enumerate}
  \item[(A.1)] $-u \in \mathcal{V}$.
  \item[(A.2)] $cu \in \mathcal V$ for every $c > 0$.
  \item[(A.3)] $u+w \in \mathcal{V}$.
  \item[(A.4)] $\max\{u,w\} \in \mathcal{V}$.
  \item[(A.5)] If there is $\ep > 0$ such that $u(z) \geq \ep$ for every $z \in \dR$, then $\tfrac{1}{u} \in \mathcal V$.
  \item[(A.6)] If there is $z_0 \in \dR$ such that $u(z_0) = w(z_0)$ then the function $v : \dR \to \dR$ that is defined by
  \[ v(z) := \textbf{1}_{z \leq z_0} u(z) + \textbf{1}_{z > z_0} w(z) \]
  is in $\calV$.
  \item[(A.7)] $u \cdot w \in \calV$.
\end{enumerate}
\end{proposition}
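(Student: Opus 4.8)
The plan is to build up each closure property by manipulating the payoff matrices directly, exploiting that the value of a zero-sum game behaves well under the relevant operations on payoffs. For (A.1), if $\Gamma = (A,B,G)$ has value function $u$, then swapping the roles of the players and negating, i.e. taking the game $(B,A,-G^{\top})$, has value function $-u$. For (A.2), multiplying every entry of $G$ by the constant $c>0$ multiplies the value by $c$; since $c$ is a fixed positive real and the entries remain polynomials in $Z$, this is again a polynomial game, giving $cu \in \calV$. (Strictly one wants $c$ to keep the entries polynomial — $c$ real is fine since polynomials in $Z$ form a real vector space.)

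For (A.3), given $\Gamma_1 = (A_1,B_1,G^1)$ with value $u$ and $\Gamma_2 = (A_2,B_2,G^2)$ with value $w$, form the "product" or "sum" game on strategy sets $A_1 \times A_2$ and $B_1 \times B_2$ with payoff $G_{(a_1,a_2),(b_1,b_2)} := G^1_{a_1,b_1} + G^2_{a_2,b_2}$. A standard argument shows the players can optimize the two coordinates independently, so the value is $u+w$; the entries are sums of polynomials in $Z$, hence polynomials. For (A.4), I would use the standard device for the maximum of two values: introduce a game where Player~1 first picks $i \in \{1,2\}$ and then plays in $\Gamma_i$, while Player~2 must play "blindly" in both — concretely, strategy sets $A_1 \sqcup A_2$ for Player~1 and $B_1 \times B_2$ for Player~2, with payoff equal to $G^1_{a,b_1}$ if $a \in A_1$ and $G^2_{a,b_2}$ if $a \in A_2$. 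Since Player~1 can guarantee $\max\{u(z),w(z)\}$ by committing to the better component and Player~2 can hold the payoff to that level by playing optimally in each coordinate, the value is $\max\{u,w\}$.

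For (A.5), the key point is that if $u(z) \ge \ep > 0$ everywhere and $u$ is the value of $\Gamma = (A,B,G)$, one wants a game whose value is $1/u$. Here I expect the main work: a natural approach is to realize $1/u(z)$ as the value of an auxiliary game whose payoffs are built from $G(z)$ — for instance, using the fact that $\val(\Gamma(z)) = t$ iff a certain linear program has optimum $t$, and that $1/t$ is the optimum of the "dual-reciprocal" program obtained by a change of variables (Charnes–Cooper-type linearization of a linear-fractional program). One must check that this transformed program is again the LP associated with some finite zero-sum game whose payoff entries are polynomials in $Z$ (here positivity of $u$, equivalently of the relevant denominators, is what makes the transformation legitimate and keeps things bounded). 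This is the step I expect to be the main obstacle, since it requires massaging the LP formulation of the value into game form while preserving polynomiality.

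Property (A.6) is a gluing statement: given $u,w \in \calV$ agreeing at $z_0$, I would take games $\Gamma_u, \Gamma_w$ and combine them so that for $z \le z_0$ the combined game "is" $\Gamma_u$ and for $z > z_0$ it "is" $\Gamma_w$, using a polynomial "switch" — e.g. along the lines of replacing $G^u$ and $G^w$ by payoffs that interpolate via the sign of $(z-z_0)$, realized through auxiliary strategies that let the appropriate player discard the irrelevant component at the cost of at most $u(z_0)=w(z_0)$; continuity/agreement at $z_0$ is exactly what prevents a jump. Finally (A.7) follows by bootstrapping: writing $uw$ in terms of operations already established — e.g. if $u,w$ were bounded below by a positive constant one could use $uw = 1/(\tfrac1u \cdot \text{something})$, but more robustly $uw = \tfrac14\big((u+w)^2 - (u-w)^2\big)$ reduces the claim to squaring, and a square $u^2$ can be obtained from (A.5) applied after shifting, or directly by a game whose payoff is $G_{a,b}(z)G_{a',b'}(z)$ on the product strategy space (the value of such a "tensor" game is $u(z)^2$ when the entries are handled carefully). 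I would present (A.7) last, deriving it from (A.1)–(A.6) plus the observation that adding a large constant (itself in $\calV$ as the value of a one-entry game) lets us assume positivity wherever (A.5) is needed.
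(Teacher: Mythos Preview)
Your treatment of (A.1)--(A.4) matches the paper's. The substance lies in (A.5) and (A.6), and there your proposal has genuine gaps.

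For (A.5), the Charnes--Cooper idea does not land back in a polynomial game. The reformulation expresses $1/\val(G)$ as $\min \mathbf{1}^{\top}\tilde x$ subject to $G^{\top}\tilde x \ge \mathbf{1}$, $\tilde x\ge 0$, but reading this LP as a game value in the usual way requires the entries of $G$ to be positive, which you cannot arrange by adding a constant: the entries are polynomials in $z$ and hence unbounded on $\dR$. The paper's device is concrete and different: border $G_u - \varepsilon$ with a row and column of zeros and place $\varepsilon$ in the new corner. Because $u(z)-\varepsilon>0$, the value of this bordered game equals that of the $2\times 2$ diagonal game with entries $\varepsilon$ and $u(z)-\varepsilon$, namely $\varepsilon - \varepsilon^{2}/u(z)$; then (A.1)--(A.3) recover $1/u$.

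For (A.6), ``a polynomial switch via the sign of $z-z_0$'' is not a construction: there is no polynomial sign, and your sketch produces no game. The paper's key observation is that $u,w\in\calV$ are already known to be piecewise rational, so one can choose polynomials $P,Q$ (of the shape $u(z_0)+c(z-z_0)+(z-z_0)^{2N+1}$ for suitable $c,N$) with $P\le u$ on $(-\infty,z_0]$ and $P\ge u,w$ on $[z_0,\infty)$, and symmetrically $Q\ge u,w$ on $(-\infty,z_0]$ and $Q\le w$ on $[z_0,\infty)$. Then $v=\min\{\max\{u,P\},\max\{w,Q\}\}$, which lies in $\calV$ by (A.1) and (A.4). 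This min--max representation via dominating polynomials is the missing step.

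On (A.7) your route genuinely differs from the paper's and in fact works, provided you sharpen it. The paper multiplies the two payoff matrices on the product strategy space, observes that the value is $u(z)w(z)$ whenever $u(z),w(z)\ge 0$, then splits $\dR$ into sign-constant intervals and glues the pieces using (A.6). Your polarization $uw=\tfrac14\bigl((u+w)^2-(u-w)^2\bigr)$ reduces to squaring, and the tensor game $G\otimes G$ has value $(\val G)^2$ for \emph{every} $z$: when $\val G(z)\ge 0$ this is the standard independence argument, and when $\val G(z)<0$ note that $(-G)\otimes(-G)=G\otimes G$ while $\val(-G)=-\val G>0$. This bypasses (A.6) entirely. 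Your alternative ``(A.5) after shifting'' for squaring, however, fails over $\dR$ since $u$ need not be bounded below.
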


\begin{proof}
Since $u,w \in \mathcal{V}$, there exist games $\Gamma_u = (A_u,B_u,G_u)$ and $\Gamma_w = (A_w,B_w,G_w)$ such that $u$ is the value function of $\Gamma_u$
and $w$ is the value function of $\Gamma_w$.

By changing the role of the two players, the value of the game is multiplied by $-1$.
Formally, the value of the game $(B_u,A_u,(-G_u)^T)$ is minus the value of the game $(A_u,B_u,G_u)$.
Statement~(A.1) follows.

By multiplying all entries of the matrix by the same positive constant $c$,
the value of the game is multiplied by $c$.
It follows that Statement~(A.2) holds as well.

To see that Statement~(A.3) holds, suppose that the two players simultaneously play the games $\Gamma_u$ and $\Gamma_w$,
and the payoff is the sum of the payoffs in the two games.
Formally, we consider the game $\Gamma = (A,B,G)$ where $A = A_u\times A_w$,
$B = B_u \times B_w$, and $G_{(a_u,a_w),(b_u,b_w)} = G_{u}(a_u,b_u)+G_{w}(a_w,b_w)$.
The reader can verify that for every $z \in \dR$, $\val(\Gamma(z)) = \val(\Gamma_u(z)) + \val(\Gamma_w(z))$.

To see that Statement~(A.4) holds, suppose again that the two players simultaneously play the games $\Gamma_u$ and $\Gamma_w$,
and, when choosing his strategies in the two games, Player~1 also chooses whether the payoff will be the payoff in $\Gamma_u$ or in $\Gamma_w$.
Formally, we consider the game $\Gamma = (A,B,G)$ where $A = A_u\times A_w \times \{U,W\}$,
$B = B_u \times B_w$, $G_{(a_u,a_w,U),(b_u,b_w)} = G_{u}(a_u,b_u)$, and $G_{(a_u,a_w,W),(b_u,b_w)} = G_{w}(a_w,b_w)$.
The reader can verify that for every $z \in \dR$, $\val(\Gamma(z)) = \max\{\val(\Gamma_u(z)),\val(\Gamma_w(z))\}$.

We turn to prove that Statement~(A.5) holds.
Let $\Gamma = (A,B,G)$ be the game where $A = \{1\} \cup A_u$, $B = \{1\} \cup B_u$, and
    \begin{align*}
    G =
    \begin{Bmatrix}
    \varepsilon & \boldsymbol{0}^{1\times|B_u|} \\
    \boldsymbol{0}^{|A_u|\times1} & G_u-\boldsymbol{\varepsilon}^{|A_u|\times|B_u|}
    \end{Bmatrix} \ ,
    \end{align*}
where $\boldsymbol{0}^{1\times|B_u|}$ and $\boldsymbol{0}^{|A_u|\times1}$ are matrices of sizes ${1\times|B_u|}$ and $|A_u|\times1$
all of whose elements are 0,
and $\boldsymbol{\varepsilon}^{|A_u|\times|B_u|}$ is the matrix of size $|A_u|\times|B_u|$
all of whose elements are $\ep$.
The value of the strategic-form game $(A_u,B_u,G_u(z)-\boldsymbol{\varepsilon}^{|A|\times|B|})$ is $u(z) - \ep$, which is positive.
Since $\varepsilon > 0$, it follows that the value of the strategic-form game $(A,B,G(z))$ is the same as the value of the strategic-form $2\times 2$ game
    \begin{align*}
    \begin{Bmatrix}
    \varepsilon & 0 \\
    0 & u(z)-\varepsilon
    \end{Bmatrix} ,
    \end{align*}
which is $\tfrac{\varepsilon(u(z)-\varepsilon)}{u(z)} = \varepsilon - \tfrac{\varepsilon^2}{u}$.
The result follows from Statements~(A.1)--(A.3).

We next prove that Statement~(A.6) holds.
We note that for every two rational functions $\widehat u,\widehat w : \dR \to \dR$ such that $\widehat u(0) = \widehat w(0) = 0$
there are $c > 0$ and $N \in \dN$ sufficiently large such that
$\widehat u(z) \geq cz + z^{2N+1}$ for every $z \leq 0$,
and $\widehat w(z) \leq cz + z^{2N+1}$ for every $z \geq 0$.
This implies that there are two polynomials $P,Q : \dR \to \dR$ such that
\begin{itemize}
\item
$u(z) \geq P(z)$ for every $z \leq z_0$,
and $u(z),w(z) \leq P(z)$ for every $z \geq z_0$.
\item
$u(z),w(z) \leq Q(z)$ for every $z \leq z_0$,
and $w(z) \geq Q(z)$ for every $z \geq z_0$.
\end{itemize}
Since $v = \min \{ \max\{u,P\}, \max\{w,Q\} \}$,
the claim follows by Statements~(A.1) and~(A.4).

We finally prove that Statement~(A.7) holds.
Since $u,w \in \calV$, both these functions are piecewise rational.
Hence,
there exist a natural number $K \in \dN$ and a sequence $-\infty = h_1 < h_2 < \cdots < h_K = \infty$
such that the sign of both $u$ and $w$ is the same over each interval $(h_k,h_{k+1})$, for $k \in \{1,2,\ldots,K-1\}$.

Let $\Gamma = (A,B,G)$ be the polynomial game in which the two players simultaneously play the games $\Gamma_u$ and $\Gamma_w$,
and the payoff is the product of the payoffs in the two games.
Formally, $A = A_u\times A_w$,
$B = B_u \times B_w$, and $G_{(a_u,a_w),(b_u,b_w)} = G_{u}(a_u,b_u) \cdot G_{w}(a_w,b_w)$.
Since the expectation of the product of two independent random variables is the product of the expectations,
it follows that if $z$ is such that both $u(z) \geq 0$ and $w(z) \geq 0$,
then $\val(\Gamma(z)) = \val(\Gamma_u(z)) \cdot \val(\Gamma_w(z))$.
By multiplying each of the functions $u$ and $w$ by $-1$, and by using Statement~(A.1),
we deduce that for every $k \in \{1,2,\ldots,K-1\}$ there is%
\footnote{In fact, the games $(\Gamma_k)_{k=1}^{K-1}$ are taken from four games.}
 a polynomial game $\Gamma_k$ such that
$\val(\Gamma_k(z)) = \val(\Gamma_u(z)) \cdot \val(\Gamma_w(z))$ for every $z \in (h_k,h_{k+1})$.
This, together with an inductive argument and Statement~(A.6), implies that Statement~(A.7) holds.
\end{proof}

\bigskip

We now have the tools to prove that every continuous piecewise rational function is in $\calV$.

\begin{proposition}
\label{prop:4}
Let $u : \dR \to \dR$ be a continuous function that is
piecewise rational. Then $u \in \mathcal{V}$.
\end{proposition}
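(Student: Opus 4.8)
The plan is to assemble $u$ out of polynomials. Every polynomial $P$ lies in $\calV$, since $z\mapsto P(z)$ is the value function of the $1\times 1$ polynomial game whose unique payoff entry is the polynomial $P(Z)$. The rest of the argument takes the finitely many rational pieces of $u$, realizes each one (on the relevant interval) as the restriction of a function in $\calV$, and then glues these functions together along the breakpoints, using the closure properties collected in Proposition~\ref{prop:minus}.

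Write $u=\sum_{k=1}^{K-1}\mathbf 1_{(h_k,h_{k+1}]}\tfrac{Q_k}{R_k}$, and for each $k$ let $J_k$ be the closure in $\dR$ of $(h_k,h_{k+1})$, so that $J_1=(-\infty,h_2]$, $J_{K-1}=[h_{K-1},\infty)$, and $J_k=[h_k,h_{k+1}]$ for $1<k<K-1$. First I would normalize each piece so that $R_k$ is positive on $J_k$ and bounded away from $0$ there. For the defining formula of $u$ to make sense, $R_k$ has no zero on $(h_k,h_{k+1}]$; and since $u$ is continuous at $h_k$, the one-sided limit $\lim_{z\to h_k^+}\tfrac{Q_k(z)}{R_k(z)}$ is finite, so any zero of $R_k$ at $h_k$ is removable and can be divided out of both $Q_k$ and $R_k$. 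Thus we may assume $R_k$ has no zero on $J_k$; being continuous and non-vanishing on the interval $J_k$ it has constant sign there, and we replace $(Q_k,R_k)$ by $(-Q_k,-R_k)$ if that sign is negative. A real polynomial positive on $J_k$ is bounded below on $J_k$ by a constant $\delta_k>0$: when $J_k$ is compact this is its (positive) minimum, and on the two unbounded end pieces a real polynomial positive on a closed half-line is either a positive constant or tends to $+\infty$, hence exceeds, say, $1$ outside a compact sub-interval and attains a positive minimum on the rest.

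Next, for each $k$ set $\widehat{R}_k:=\max\{R_k,\delta_k/2\}$. Since $R_k$ and the constant $\delta_k/2$ are polynomials they lie in $\calV$, so Statement~(A.4) gives $\widehat{R}_k\in\calV$; as $\widehat{R}_k\ge\delta_k/2>0$ everywhere, Statement~(A.5) gives $\tfrac{1}{\widehat{R}_k}\in\calV$; and multiplying by the polynomial $Q_k\in\calV$ and invoking Statement~(A.7) shows that $\widetilde{u}_k:=Q_k\cdot\tfrac{1}{\widehat{R}_k}\in\calV$. On $J_k$ we have $R_k\ge\delta_k>\delta_k/2$, so there $\widehat{R}_k=R_k$ and hence $\widetilde{u}_k=\tfrac{Q_k}{R_k}=u$ throughout $J_k$.

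It remains to glue $\widetilde{u}_1,\dots,\widetilde{u}_{K-1}$ along the breakpoints $h_2,\dots,h_{K-1}$. Put $v_1:=\widetilde{u}_1$, and for $k=2,\dots,K-1$ define inductively
\[ v_k(z):=\mathbf 1_{z\le h_k}\,v_{k-1}(z)+\mathbf 1_{z>h_k}\,\widetilde{u}_k(z). \]
By induction, $v_{k-1}=u$ on $(-\infty,h_k]$; then $v_{k-1}(h_k)=u(h_k)=\widetilde{u}_k(h_k)$ because $h_k\in J_k$, so Statement~(A.6) gives $v_k\in\calV$, while $v_k=u$ on $(-\infty,h_k]$ and $v_k=\widetilde{u}_k=u$ on $(h_k,h_{k+1}]\subseteq J_k$, i.e.\ $v_k=u$ on $(-\infty,h_{k+1}]$. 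Taking $k=K-1$ yields $v_{K-1}=u$ on all of $\dR$, so $u=v_{K-1}\in\calV$. I expect the only genuinely delicate point to be the normalization of the denominators, specifically checking that each cleared $R_k$ stays bounded away from $0$ on its closed piece $J_k$ — on the two unbounded pieces this rests on the asymptotics of polynomials rather than on compactness; given that, Statements~(A.4)--(A.7) carry out the rest mechanically.
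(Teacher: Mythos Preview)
Your proof is correct and follows essentially the same approach as the paper: normalize each denominator $R_k$ to be positive and bounded away from $0$ on its interval, use (A.4) and (A.5) to realize $\tfrac{1}{\max\{R_k,\text{const}\}}$ in $\calV$, multiply by $Q_k$ via (A.7), and glue the pieces with (A.6). Your treatment of the endpoints (clearing removable zeros of $R_k$ at $h_k$) is in fact more careful than the paper's, which simply asserts $R_k>0$ on the closed interval without justifying why the infimum on the open interval is positive.
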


\begin{proof}
Since the function $u$ is piecewise rational,
there are $-\infty = h_1 < h_2 < \cdots < h_K = \infty$ and rational functions $\left(\tfrac{Q_k}{R_k}\right)_{k=0}^{K-1}$
such that $u = \tfrac{Q_k}{R_k}$ on the interval $(h_k,h_{k+1})$, for every $k=1,2,\ldots,K-1$.
Assume w.l.o.g.~that for every $k=1,2,\ldots,K-1$ the polynomial $R_k$ is positive on $(h_k,h_{k+1})$,
and denote by $\ep_k := \inf\{R_k(z) \colon z \in (h_k,h_{k+1})\}$ the minimum of $R_k$ in this interval.

We argue that $\ep_k > 0$ for every $k=0,1,\ldots,K-1$.
For $k=2,\ldots,K-2$ we have $\ep_k > 0$ since the interval $(h_k,h_{k+1})$ is bounded and since $R_k > 0$ on $[h_k,h_{k+1}]$.
For $k=1$ we have $\ep_k > 0$ since $\lim_{z \to -\infty} R_1(z) > 0$ and since $R_1(z) > 0$ on $(-\infty,h_1]$.
For analogous reasons, $\ep_{K-1} > 0$.

By Statement~(A.4) of Proposition~\ref{prop:minus} we have $\max\{R_k,\ep_k\} \in \calV$ for every $k=1,\ldots,K-1$.
By Statements~(A.5) and~(A.7) of Proposition~\ref{prop:minus} and since $\ep_k > 0$ we have $\frac{Q_k}{\max\{R_k,\ep_k\}} \in \calV$ for every $k=1,\ldots,K-1$.
By iterative use of Statement~(A.6) of Proposition~\ref{prop:minus},
we have
\[ \sum_{k=1}^{K-1} \textbf{1}_{z \in (h_k,h_{k+1}]} \frac{Q_k}{\max\{R_k,\ep_k\}} \in \calV. \]
However, this function is equal to $u$, and the result follows.
\end{proof}

\section{Discussion}
\label{sec:discussion}

A linear game is a polynomial game where the payoff in each entry is a linear function of the parameter $Z$.
The set of value functions of linear games is a strict subset of $\mathcal{V}$.
Indeed, in a linear game the maximal payoff is linear in $Z$,
hence the value function is bounded by a linear function of $Z$.
In particular, the function $Z^2$ cannot be the value function of any linear game.

When the domain of the parameter $Z$ is a compact interval $I$ rather than $\dR$,
the set $\calV_I$ of value functions of linear games coincides with the restriction of $\calV$ to $I$;
that is, the set $\calV_I$ is the set of all continuous and piecewise rational functions from $I$ to $\dR$.
The only parts in our proof that do not carry over to linear games are the proofs of Conditions~(A.6) and~(A.7).
To see that Condition~(A.6) holds for linear games, we note that one can prove by induction on $k$ that
$Z^k \in \calV_I$, for every $k \in \dN$.
Instead of proving Condition~(A.7), one can prove by induction on the degree of $Q$ that
for every positive real number $\ep > 0$ and every two polynomials $Q$ and $R$,
the function $\frac{Q}{\max\{R,\ep\}} \in \calV_I$,
which is the property that is needed in the proof of Proposition~\ref{prop:4}.


One example of a linear game that is defined over a compact interval
is a Bayesian game with two states of nature $s_0$ and $s_1$,
where the parameter $Z$ is the prior probability that the state is $s_0$.
Thus, our result implies that the set of all value functions of Bayesian games with two states of nature
is the set of all continuous piecewise rational functions defined on $[0,1]$.

A natural question that arises from our study is what happens when the payoffs depend on two (or more) parameters, say $Z$ and $W$.
In the context of Bayesian games,
this will be equivalent to Bayesian games with more than two states of nature.
In that case, the value function is a function that assigns a real number to every $(Z,W) \in \dR^2$.
Some parts of our arguments extend to this case.
The difficult step is to prove that Condition~(A.6) holds.
This condition turns out to be related to the Pierce-Birkhoff Conjecture,
which asks whether every continuous piecewise polynomial function in $\dR^d$ is the maximum of finitely many minima of finitely many polynomials.
We leave the characterization of the set of value functions of polynomial games with several parameters for future research.

\end{document}